\newcommand{\Spec}{\operatorname{Spec}}
\newcommand{\diffSpec}{\operatorname{Spec^\Sigma}}
\newcommand{\PSpec}{\operatorname{PSpec}}
\newcommand{\Rad}{\operatorname{Rad}}
\newcommand{\perfRad}{\operatorname{Rad^{perf}}}
\newcommand{\PId}{\operatorname{Id^\Sigma}}
\newcommand{\PRad}{\operatorname{Rad^\Sigma}}
\newcommand{\Max}{\operatorname{Max}}
\newcommand{\Id}{\operatorname{Id}}
\newcommand{\PMax}{\operatorname{Max^\Sigma}}
\newcommand{\Qt}{\operatorname{Qt}}
\newcommand{\Fun}{\operatorname{F}}
\newtheorem{theorem}{Theorem}
\newtheorem{corollary}[theorem]{Corollary}
\newtheorem{proposition}[theorem]{Proposition}
\newtheorem*{statement*}{Statement}
\newtheorem*{proposition*}{Proposition}
\newtheorem*{theorem*}{Theorem}
\newtheorem*{lemma*}{Lemma}
\newtheorem*{fact*}{Fact}
\theoremstyle{definition}
\newtheorem*{definition*}{Definition}
\newtheorem*{example*}{Example}
\theoremstyle{remark}
\newtheorem{remark}[theorem]{Remark}
\newtheorem*{remark*}{Remark}
\author{D.\,V.~Trushin\footnote{This work was partially supported by the NSF
grants CCF-0964875 and CCF-0952591}}
\title{Difference Nullstellensatz\footnote{\Xy-pic package is used}}
\date{}
\begin{document}

\maketitle

\begin{abstract}
We prove different forms of the Nullstellensatz for difference
fields and absolutely flat simple difference rings, called
pseudofields. A difference ring is a ring on which an arbitrary
group is acting by means of ring automorphisms.
\end{abstract}

\section{Introduction}

We develop a basis for a geometric theory of difference equations.
The latter means that we want the solutions of difference equations
to be points of some ``affine space''. Such a theory should be used
in the Galois theory of differential and difference equations with
difference parameters. A first example of such a theory with its
application to the Galois theory of difference equations can be
found in~\cite{Tr5} and~\cite{AOT}, respectively. We plan to apply
the results of this paper to develop a general Galois theory with
difference parameters. It should be noted that, in the case of one
difference operator, there is another approach, which is being
developed in~\cite{WibmerDcChev}.

One could proceed in the following two different ways. The first
approach comes from the ACFA theory (see a detailed survey
in~\cite{ChadzSur} and also~\cite{ChatzHrushMDF}), where we consider
only solutions in fields. A differential Galois theory based on
difference fields can be found in~\cite[Chapter~8]{LevinDA}. The
second approach says that we should extend the class of fields to a
wider class. Usually, people use the class of finite products of
fields
(see~\cite{AmMasPV,AOT,ChatHarSinDGG,HarSinLDiff,TakeuHopPV,Tr5,vdPSinDcE,vdPSinLDcE,WibmerThesis}).
It turns out that, for the needs of geometric theory, finite
products of fields are not good enough. Our main result is that we
have found the desired class of rings called pseudofields. And the
main theorem says that every difference pseudofield (in particular,
difference field) embeds into a difference closed pseudofield, that
is an analogue of an algebraically (differentially) closed field in
commutative (differential) algebra (Theorem~\ref{theorps}). This
result allows to consider all necessary solutions of any set of
difference equations in a one fixed pseudofield.

Our difference closed field (see our general definition in
Section~\ref{sec21}) should be considered as a generalization of a
model of the ACFA theory. For such fields, we prove two versions of
the Nullstellensatz, that are Theorem~\ref{theorf} and~\ref{mainth}.
Moreover, we show that our results hold even in the case of an
arbitrary set of difference indeterminates (Theorems~\ref{theorefi}
and~\ref{mainthi} for fields, and~\ref{theorii} for pseudofields,
respectively).

It should be noted that the most important thing was to find the
right definition of a pseudofield. Earlier, people used only finite
products of fields to study solutions of difference equations. But,
if we want to study solutions of ``all possible equations'', we
should use another structure. Our solution is using absolutely flat
rings, which are a direct generalization of finite products of
fields. Also, we note that absolutely flat difference rings have
been considered in~\cite{HrushvN}. Moreover, this new class of rings
requires to develop absolutely new machinery, which is presented in
the paper. We notice that the key role in our paper is played by
pseudoprime ideals and the ring of functions presented in
Section~\ref{sec22}. Pseudoprime ideals in different forms appeared
in~\cite{Tr5,WibmerDcChev,WibmerThesis}. The ring of functions was
used in~\cite{MorikDG1,UmMorDG2}. In the latter papers, the authors
use slightly different terminology.

The paper is organized as follows. In Section~\ref{sec2}, we give
all necessary definitions and recall some basic facts.
Section~\ref{sec3} is devoted to discussion of the subject of the
paper. We explain the meaning of the difference closedness for
difference rings and show how the general point of view gives us the
notions of difference closed fields and pseudofields. The rest of
the paper consists of two parts devoted to fields and pseudofields,
respectively. In Sections~\ref{sec4} and \ref{sec5}, we prove that
every difference field embeds into a difference closed field
(Theorem~\ref{theorf}) and show a strong version of the difference
Nullstellensatz for difference closed fields (Theorem~\ref{mainth}).
The case of infinitely many difference indeterminates is given
separately. In Section~\ref{sec6}, we deal with pseudofields. We
prove that every pseudofield embeds into a difference closed one
(Theorem~\ref{theorps}). The case of infinitely many indeterminates
is also presented in the last section.

\paragraph{Acknowledgment.} I am grateful to Alexey
Ovchinnikov for his helpful suggestions, to Alexander Levin for the
interest to the paper and help in learning difference algebra, and
to Michael Wibmer for a conversation of the material.

\section{The detailed structure of the paper}

Here, we describe some technical details of the paper.

Section~\ref{sec2} is devoted to basic notions and facts. In
Section~\ref{sec21}, we give all necessary definitions. We underline
that pseudoprime ideals and their relation with the usual spectrum
play the key role in our paper. Precisely, we use the continuous map
$\pi$ from the spectrum to the pseudospectrum. This relation reduces
the difference problem to a commutative one. In Section~\ref{sec22},
we recall the notion of the ring of functions. Since we do not have
a pseudofield of fractions we have to use Proposition~\ref{taylor}
allowing to embed an arbitrary simple difference ring into a
pseudofield.

Section~\ref{sec3} is divided into two parts. In the first one, we
discuss the notion of difference closed ring and a weak form of the
difference Nullstellensatz for fields (Proposition~\ref{prop_eqB})
and pseudofields (Propositions~\ref{prop_eqA}).  As in the second
part, we extend the subject to an arbitrary set of difference
indeterminates (Proposition~\ref{prop_eqB_inft} for fields and
Propositions~\ref{prop_eqA_inft} for pseudofields, respectively).

Section~\ref{sec4} is devoted to difference fields. Our important
technical tool is Proposition~\ref{genlemma} allowing to deal with
non-Noetherian rings (this is a generalization of the Hilbert basis
theorem). In Section~\ref{sec41}, we show that every difference
field embeds into a difference closed one (Theorem~\ref{theorf}).
Since we have some problems with compatibility, we have to slightly
modify Artin's proof of the similar fact for fields. We use only
tensor products of compatible integral domains.  In
Section~\ref{sec42}, we show that every difference field embeds even
into a $\kappa$-closed difference field (Theorem~\ref{theorefi}).
This fact has absolutely the same proof but the statements of these
theorems are slightly different. A difference closed field is a
natural generalization of a model of the ACFA to the case of an
arbitrary set of difference operators. Section~\ref{sec5} is devoted
to a strong version of the difference Nullstellensatz for fields.
Section~\ref{sec51} deals with finitely many difference
indeterminates (Theorem~\ref{mainth}), and Section~\ref{sec52} with
infinitely many ones (Theorem~\ref{mainthi}). To prove these
theorems, we are able to use the usual localization because we deal
with perfect difference ideals.

In Section~\ref{sec6}, we extend our theory to the case of
pseudofields. This case is much harder than the case of difference
fields. In particular, if we deal with fields, we can use the
Hilbert basis theorem and the localization. But, if we deal with
pseudofields, we have to produce absolutely new machinery.
Proposition~\ref{poset} is an analogue of the Ritt-Raudenbush basis
theorem. To prove Proposition~\ref{poset}, we use a bound on the
number of prime ideals in a pseudofield (Proposition~\ref{specst}).
To apply the Artin proof for the existence of a difference closed
pseudofield containing a given one, we show that the tensor product
operation over a pseudofield has no ``zero-divisors''
(Proposition~\ref{tens}). As above, Section~\ref{sec61} is devoted
to the case of finitely many difference indeterminates
(Theorem~\ref{theorps}), and Section~\ref{sec62} to the case of
infinitely many ones (Theorem~\ref{theorii}).  It should be noted
that, at present, we do not have an analogue of the strong version
of the Nullstellensatz for pseudofields. Partially, this happens
because we cannot use the localization.

\section{Terms and notation}\label{sec2}

\subsection{Definitions}\label{sec21}

Let $\Sigma$ be an arbitrary group. This group will be fixed during
the paper. A ring $A$ will be said to be a difference ring if $A$ is
an associative commutative ring with an identity element such that
the group $\Sigma$ is acting on $A$ by means of ring automorphisms.
A difference homomorphism is a homomorphism preserving the identity
element and commuting with the action of $\Sigma$. A difference
ideal is an ideal stable under the action of the group $\Sigma$,
that is $\sigma(\mathfrak a)\subseteq \mathfrak a$ for all
$\sigma\in\Sigma$. Sometimes, we will write $\Sigma$ instead of the
word difference. A simple difference ring is a difference ring such
that the zero ideal is its unique proper difference ideal. For every
ideal $\mathfrak a\subseteq A$ and every element $\sigma\in\Sigma$,
the image of $\mathfrak a$ under $\sigma$ will be denoted by
$\mathfrak a^\sigma$. A difference ring $A$ will be called a
pseudofield if $A$ is an absolutely flat simple difference ring.

It should be noted that we obtain usual difference rings if $\Sigma$
is a free finitely generated abelian group. Moreover, since we
consider the action of a group instead of a monoid, all our rings
are inversive. In particular, all our difference ideals are
reflexive in sense of~\cite{CohnDA} and~\cite{LevinDA}.

The set of all, radical, prime, maximal ideals of $A$ will be
denoted by $\Id A$, $\Rad A$, $\Spec A$, $\Max A$, respectively. The
set of all difference ideals of $A$ will be denoted by $\PId A$.
$\PRad A$ will denote the set of all radical difference ideals of
$A$.

The definition of a pseudoprime ideal was given in~\cite{Tr5} in the
most general case (the definition in some particular case was given
in~\cite{WibmerThesis}). Let $S\subseteq A$ be a multiplicatively
closed subset in a difference ring $A$ and let $\mathfrak q$ be a
maximal $\Sigma$-ideal not meeting $S$. In this case, the ideal
$\mathfrak q$ will be called a pseudoprime ideal. The set of all
pseudoprime ideals will be denoted by $\PSpec A$ and will be called
the pseudospectrum of $A$. The set of all maximal difference ideals
of $A$ will be denoted by $\PMax A$. Every maximal difference ideal
is pseudoprime. For we should take $S=\{1\}$. For an ideal
$\mathfrak a$ and a multiplicatively closed set $S$, the ideal
\[
\{x\mid \exists s\in S: xs\in\mathfrak a \}
\]
will be denoted by $\mathfrak a:S^\infty$.

For every ideal $\mathfrak a\subseteq A$, the largest $\Sigma$-ideal
contained in $\mathfrak a$ will be denoted by $\mathfrak a_\Sigma$.
Such an ideal exists because it coincides with the sum of all
difference ideals contained in $\mathfrak a$. Note that
\[
\mathfrak a_\Sigma=\{\, a\in\mathfrak a\mid\forall
\sigma\in\Sigma\colon \sigma(a)\in\mathfrak a \,\}.
\]
So, we have the following map
\[
\pi\colon \Id A\to \PId A
\]
defined by $\mathfrak a\mapsto \mathfrak a_\Sigma$. A
straightforward calculation shows that, for every family of ideals
$\mathfrak a_\alpha$, we have
\[
\pi(\bigcap_{\alpha} \mathfrak a_\alpha)=\bigcap_{\alpha}
\pi(\mathfrak a_\alpha).
\]
We see that, for any ideal $\mathfrak a$, we have
\[
\mathfrak a_\Sigma=\bigcap_{\sigma\in\Sigma} \mathfrak a^\sigma.
\]

Note that the restriction of $\pi$ to the spectrum gives the map
\[
\pi\colon \Spec A\to \PSpec A.
\]
The ideal $\mathfrak p$ will be called $\Sigma$-associated with
pseudoprime $\mathfrak q$ if $\pi(\mathfrak p)=\mathfrak q$. Let
$\mathfrak q$ be a pseudoprime ideal and let $S$ be a
multiplicatively closed set from the definition of $\mathfrak q$,
then every prime ideal containing $\mathfrak q$ and not meeting $S$
is $\Sigma$-associated with $\mathfrak q$. So, the map $\pi\colon
\Spec A\to \PSpec A$ is surjective.

If $S$ is a multiplicatively closed subset in $A$, then the  ring of
fractions of $A$ with respect to $S$ will be denoted by $S^{-1}A$.
If $S=\{t^n\}_{n=0}^\infty$, then $S^{-1}A$ will be denoted by
$A_t$. If $\mathfrak p$ is a prime ideal of $A$ and $S=A\setminus
\mathfrak p$, then  $S^{-1}A$ will be denoted by $A_{\mathfrak p}$.
The residue field of a prime ideal $\mathfrak p$ is the field of
fractions of $A/\mathfrak p$ and is denoted by $k(\mathfrak p)$. If
$A$ is an integral domain, then the field of fractions of $A$ will
be denoted by $\Qt(A)$.

For every subset $X$ of a difference ring $A$, the smallest
difference ideal containing $X$ will be denoted by $[X]$ and the
smallest perfect difference ideal containing $X$ will be denoted by
$\{X\}$~\cite[Chapter~2, Section~2.3, Definition~2.3.1]{LevinDA}.
The radical of an ideal $\mathfrak a$ will be denoted by $\mathfrak
r(\mathfrak a)$. The inclusion $\mathfrak r([X])\subseteq\{X\}$
always holds. The smallest ideal generated by the set $X$ will be
denoted by $(X)$.

The set of all perfect difference ideals of a ring $A$ will be
denoted by $\perfRad A$ and the set of prime difference ideals will
be denoted by $\diffSpec A$.

Let $f\colon A\to B$ be a homomorphism of rings and let $\mathfrak
a$ and $\mathfrak b$ be ideals of $A$ and $B$, respectively. We
define the extension $\mathfrak a^e$ of $\mathfrak a$ to be the
ideal $f(\mathfrak a)B$ generated by $f(\mathfrak a)$ in $B$. The
ideal $f^*(\mathfrak b)=f^{-1}(\mathfrak b)$ is called the
contraction of $\mathfrak b$ and is denoted by $\mathfrak b^c$. If a
homomorphism $f\colon A\to B$ is difference and ideals $\mathfrak a$
and $\mathfrak b$ are difference, then their extensions and
contractions, respectively, are difference.

Let $f\colon A\to B$ be a $\Sigma$-homomorphism of difference rings
and let $\mathfrak q$ be a pseudoprime ideal of $B$. The contraction
$\mathfrak q^c$ is pseudoprime because $\pi$ is surjective and
commutes with $f^*$. So, we have a map from $\PSpec B$ to $\PSpec
A$. This map will be denoted by $f^*_\Sigma$. It follows from
definition that the following diagram is commutative
\[
\xymatrix{
    \Spec B\ar[r]^{f^*}\ar[d]^{\pi}&\Spec A\ar[d]^{\pi}\\
    \PSpec B\ar[r]^{f^*_\Sigma}&\PSpec A\\
}
\]

The ring of difference polynomials $A\{Y\}$ over $A$ in variables
$Y$ is the polynomial ring $A[\Sigma Y]$ (that is the polynomial
ring in variables denoted by $\sigma y$ for all $\sigma\in \Sigma$
and $y\in Y$), where $\Sigma$ is acting in the natural way ($\sigma
(\tau y)=(\sigma\tau)y$). A difference ring $B$ will be called a
difference $A$-algebra if there is a difference homomorphism $A\to
B$. It is clear that every difference $A$-algebra can be presented
as a quotient ring of a difference polynomial ring over $A$.

For an arbitrary difference ring $A$ and an arbitrary set $Y$, an
arbitrary difference polynomial $f\in A\{Y\}$ involves only finitely
many indeterminates. Therefore, $f$ can be considered as a function
on $A^Y$. Thus, we can speak about zeros of difference polynomials
in the ``affine space'' $A^Y$ and about polynomials vanishing on
some subset. If $E\subseteq A\{Y\}$, then $V(E)\subseteq A^Y$ will
denote the set of all common zeros of $E$ in $A^Y$. And, for an
arbitrary subset $X\subseteq A^Y$, we set $I(X)$ to be the set of
all difference polynomials vanishing on $X$. We will use this
notation for finite and infinite sets $Y$. For the convenience, the
sets $V(E)$ will be called difference algebraic varieties.

\subsection{The ring of functions}\label{sec22}

Let us recall the definition of the ring $\Fun B$
(see~\cite[Section~3.3]{Tr5}). For an arbitrary commutative ring
$B$, the set of all functions from $\Sigma$ to $B$ will be denoted
by $\Fun B$, i.~e., $\Fun B=B^\Sigma$. As a commutative ring, it
coincides with the product $\prod_{\sigma\in\Sigma} B$. The group
$\Sigma$ is acting on $\Fun B$ by the rule
$\sigma(f)(\tau)=f(\sigma^{-1}\tau)$. For every element $\sigma$ of
$\Sigma$, there is the substitution homomorphism
\begin{equation*}
\begin{split}
\gamma_{\sigma}\colon\Fun B&\to B\\
f&\mapsto f(\sigma)
\end{split}
\end{equation*}
It is clear that $\gamma_{\tau}(\sigma
f)=\gamma_{\sigma^{-1}\tau}(f)$.

The following statement is proved in~\cite[Proposition~6]{Tr5}.

\begin{proposition}\label{taylor}
Let $A$ be a difference ring, $B$ is a ring, and $\varphi\colon A\to
B$ is a homomorphism. Then, for every $\sigma\in \Sigma$, there
exists a unique difference homomorphism $\Phi_{\sigma}\colon A\to
\Fun B$ such that the following diagram is commutative
\[
\xymatrix{
                                    & {\Fun B}\ar[d]^{\gamma_{\sigma}} \\
        A\ar[r]^{\varphi}\ar[ur]^{\Phi_\sigma} & B
}
\]
\end{proposition}

\section{The statement of problem}\label{sec3}

In this section, we will discuss different kinds of the difference
Nullstellensatz appearing in difference algebra and prepare some
statements for further use. In commutative algebra, the Hilbert
Nullstellensatz shows a relation between the set of solutions of a
systems of polynomial equations with the set of radical ideals in a
polynomial ring. We will find the similar relation in the difference
case.

\subsection{Difference closed fields and pseudofields}\label{sec31}

Firstly, we consider the case of finitely many difference
indeterminates. Consider the following two assertions about a
difference ring $A$.

\begin{description}\label{def_A}
\item[({\bf A1})] For every natural $n$ and every proper difference
ideal $\mathfrak a\subseteq A\{y_1,\ldots,y_n\}$, there is a common
zero for $\mathfrak a$ in $A^n$.

\item[({\bf A2})] For every natural number $n$ and every subset $E\subseteq
A\{y_1,\ldots,y_n\}$, if there exists a common zero for $E$ in $B^n$
for some ring $B$ containing $A$, then there exists a common zero
for $E$ in $A^n$.
\end{description}

\begin{proposition}\label{prop_diff_simple}
If condition \emph{(}{\bf A1}\emph{)} holds for a difference ring
$A$, then $A$ is simple.
\end{proposition}
\begin{proof}
Suppose not, there is a nontrivial difference ideal $\mathfrak
a\subseteq A$. Then the ideal $\mathfrak a\{y_1,\ldots,y_n\}$ is not
trivial and has no common zeros in $A^n$ for all $n$.
\end{proof}

\begin{proposition}\label{prop_eqA}
For an arbitrary difference ring $A$, conditions \emph{(}{\bf
A1}\emph{)} and \emph{(}{\bf A2}\emph{)} are equivalent.
\end{proposition}
\begin{proof}

({\bf A1})$\Rightarrow$({\bf A2}). Suppose that, for some subset
\[
E\subseteq A\{y_1,\ldots,y_n\},
\]
there is a ring $B$ containing $A$ and a common zero $b\in B^n$ for
$E$. Consider the substitution homomorphism
\[
A\{y_1,\ldots,y_n\}\to B,
\]
where $y_i\mapsto b_i$. Then its kernel is a proper difference ideal
containing $E$. Therefore, the ideal $[E]$ is a proper difference
ideal. Now, by ({\bf A1}), we have a common zero in $A^n$.

({\bf A2})$\Rightarrow$({\bf A1}). Let $\mathfrak a\subseteq
A\{y_1,\ldots,y_n\}$ be a proper difference ideal, the ring
$A\{y_1,\ldots,y_n\}/\mathfrak a$ be denoted by $B$, and the images
of $y_i$ be denoted by $b_i$. Then $b_i$ give a common zero in $B^n$
for $\mathfrak a$. Now, ({\bf A2}) gives a common zero for
$\mathfrak a$ in $A^n$.
\end{proof}

We are interested in difference rings satisfying these equivalent
conditions. It should be noted that we use all difference ideals
$\mathfrak a$ in condition ({\bf A1}) and that we use all difference
rings $B$ in condition ({\bf A2}). We can weaken these conditions.
For ({\bf A1}), we can use only perfect difference ideals and, for
({\bf A2}), we can use only integral domains. Now, suppose that the
ring $A$ is an integral domain. So, we have the following.

\begin{description}\label{def_B}
\item[({\bf B1})] For every natural number $n$ and every proper
perfect difference ideal $\mathfrak a\subseteq A\{y_1,\ldots,y_n\}$,
there exists a common zero for $\mathfrak a$ in $A^n$.

\item[({\bf B2})] For every natural number $n$ and every subset $E\subseteq
A\{y_1,\ldots,y_n\}$, if there is a common zero for $E$ in $B^n$,
where $B$ is a difference integral domain containing $A$, then there
is a common zero for $E$ in $A^n$.
\end{description}

It should be noted that the condition ``$A$ to be an integral
domain'' is somewhat important in these definitions (see the
proposition below). We need it to avoid the following pathologic
situation. It could be that $A$ is not an integral domain and does
not contain any prime difference ideal. Then such a ring satisfies
both definitions. And again, these conditions are equivalent.

\begin{proposition}\label{prop_eqB}
If $A$ is a difference integral domain, then conditions \emph{(}{\bf
B1}\emph{)} and \emph{(}{\bf B2}\emph{)} are equivalent.
\end{proposition}
\begin{proof}

({\bf B1})$\Rightarrow$({\bf B2}). This part of the proof is
absolutely the same as in the proof of Proposition~\ref{prop_eqA}.

({\bf B2})$\Rightarrow$({\bf B1}). Let $\mathfrak t\subseteq
A\{y_1,\ldots,y_n\}$ be a proper perfect difference ideal. By Zorn's
lemma, there exists a maximal perfect difference ideal $\mathfrak p$
containing $\mathfrak t$. The ideal $\mathfrak p$ is prime. Indeed,
let $a\notin \mathfrak p$ be an arbitrary element and  $S$ be the
multiplicatively closed subset generated by elements $\sigma(a)$ for
all $\sigma\in \Sigma$. Since $\mathfrak p$ is perfect, $S\cap
\mathfrak p=\varnothing$. By definition, we have $\mathfrak p
\subseteq \mathfrak p:S^\infty$ and $S\cap \mathfrak
p:S^\infty=\varnothing$. Therefore, $\mathfrak p = \mathfrak
p:S^\infty$. So, every element $a$ not belonging to $\mathfrak p$ is
not a zero devisor modulo $\mathfrak p$. (Another proof for
$\Sigma=\mathbb N^n$ is given in the proof of Proposition~2.3.4
of~\cite[Chapter~2, Section~2.3]{LevinDA}).

Let us denote the quotient ring $A\{y_1,\ldots,y_n\}/\mathfrak p$ by
$B$ and the images of $y_i$ by $b_i$. Then $b_i$ form a common zero
for $\mathfrak t$ in $B^n$. Now, from ({\bf B2}), it follows that
there is a common zero for $\mathfrak t$ in $A^n$.
\end{proof}

We can simply modify condition ({\bf B2}) as follows.

\begin{description}
\item[({\bf B2'})] For every natural number $n$ and every subset $E\subseteq
A\{y_1,\ldots,y_n\}$, if there is a common zero for $E$ in $B^n$,
where $B$ is a difference field containing $A$, then there is a
common zero for $E$ in $A^n$.
\end{description}

In the further text, we will study pseudofields satisfying
conditions ({\bf A1}) and ({\bf A2}) and fields satisfying ({\bf
B1}) and ({\bf B2}). The corresponding pseudofields and fields will
be called difference closed. We will prove that every difference
pseudofield (field) embeds into a difference closed pseudofield
(field).

\subsection{$\kappa$-difference closed fields and
pseudofields}\label{sec32}

In the previous section, we deal with difference polynomials in
finitely many variables. In this one, we will describe a general
notion of $\kappa$-closedness. Here, $\kappa$ is supposed to be a
fixed infinite cardinal.

Let $Y$ be an arbitrary infinite set of cardinality $\kappa$.
Consider the following two assertions about a difference ring $A$.

\begin{description}
\item[({\bf A1})] For every proper difference
ideal $\mathfrak a\subseteq A\{Y\}$, there is a common zero for
$\mathfrak a$ in $A^Y$.

\item[({\bf A2})] For every subset $E\subseteq
A\{Y\}$, if there exists a common zero for $E$ in $B^Y$, where $B$
is a ring containing $A$, then there exists a common zero for $E$ in
$A^Y$.
\end{description}

As in the previous section we have.

\begin{proposition}\label{prop_eqA_inft}
For an arbitrary difference ring $A$, conditions  \emph{(}{\bf
A1}\emph{)} and \emph{(}{\bf A2}\emph{)} are equivalent.
\end{proposition}
\begin{proof}
The proof is absolutely the same as the proof of
Proposition~\ref{prop_eqA}, so, we omit it.
\end{proof}

And using only perfect difference ideals, we get the following
conditions.

\begin{description}
\item[({\bf B1})] For every proper perfect difference ideal
$\mathfrak a\subseteq A\{Y\}$, there exists a common zero for
$\mathfrak a$ in $A^Y$.

\item[({\bf B2})] For every subset $E\subseteq
A\{Y\}$, if there is a common zero for $E$ in $B^Y$, where $B$ is a
difference integral domain containing $A$, then there is a common
zero for $E$ in $A^Y$.
\end{description}

\begin{proposition}\label{prop_eqB_inft}
For an arbitrary difference integral domain $A$, conditions
\emph{(}{\bf B1}\emph{)} and \emph{(}{\bf B2}\emph{)} are
equivalent.
\end{proposition}
\begin{proof}
We omit the proof because it repeats the proof of
Proposition~\ref{prop_eqB}.
\end{proof}

Pseudofields satisfying properties ({\bf A1}) and ({\bf A2}) will be
called $\kappa$-closed pseudofields, and difference fields
satisfying ({\bf B1}) and ({\bf B2}) will be called $\kappa$-closed
difference fields. We will prove that every pseudofield (field)
embeds even in a $\kappa$-closed pseudofield (difference field).

It should be noted that the notion of $\kappa$-closedness is an
analogue of $\kappa$-saturation for models in model theory
(see~\cite[Chapter~9, Section~2]{BPoizotCMT}). Namely,
$\kappa$-closed difference fields are exactly $\kappa$-saturated.
However, pseudofields cannot be described in terms of axioms of the
first order language. Therefore, we obtain absolutely new objects
with absolutely new properties, and $\kappa$-closedness is the
desired analogue of $\kappa$-saturation.

\section{The case of fields}\label{sec4}

In this section, we will deal with fields and perfect and prime
difference ideals only. The cases of finitely and infinitely many
difference indeterminates will be considered separately. However,
the proofs are very similar and we will omit some of them to avoid a
repetition.

\subsection{Finitely many indeterminates}\label{sec41}

Let the countable cardinal be denoted by $\omega$.

\begin{proposition}\label{genlemma}
Let $K$ be a field. Consider a polynomial ring $K[Y]$, where $Y$ is
an arbitrary set of indeterminates. Then every ideal of  $K[Y]$ is
generated by the set of cardinality $|Y|\times\omega$.
\end{proposition}
\begin{proof}

If the set $Y$ is finite, then every ideal is finitely generated by
the Hilbert Basis theorem and, thus, the statement holds. Now, we
suppose that $Y$ is infinite. Then $|Y|\times \omega=|Y|$. We may
suppose that $Y=\{y_\alpha\}_{\alpha\in \mathcal A}$ and the set
$\mathcal A$ is well-ordered (we identify the elements of $\mathcal
A$ with the corresponding ordinals, thus we can speak about the
cardinality of $\alpha\in \mathcal A$). Then the set
$\{\,y_\alpha\in Y\mid y_\alpha< y_\gamma\,\}$ will be denoted by
$Y_\gamma$.

Consider subsets $Y_0,Y_1,\ldots,Y_n,\ldots, Y_\omega$. Let
$\mathfrak a\subseteq K[Y_\omega]$  be an ideal, we set $\mathfrak
a_n=\mathfrak a\cap K[Y_n]$. It is clear that  $\mathfrak
a=\cup_n\mathfrak a_n$. Every ideal $\mathfrak a_n$  is finitely
generated. Consequently, their union is countably generated. In
other words, every ideal of $K[Y_\omega]$ is countably generated.

Using transfinite induction, we will prove that every ideal in
$K[Y_\gamma]$ is generated by the set of cardinality $|\gamma|$.
Consider the case of a successor ordinal. Suppose that all ideals in
$K[Y_\alpha]$ are generated by the set of elements of cardinality
$|\alpha|$. Then the rings $K[Y_\alpha]$ and $K[Y_{\alpha+1}]$ are
isomorphic. Therefore, every ideal in $K[Y_{\alpha+1}]$ is generated
by the set of cardinality $|\alpha|=|\alpha+1|$.

The case of a limit ordinal. Let
$Y_\gamma=\cup_{\alpha<\gamma}Y_\alpha$. For every ideal $\mathfrak
a\subseteq K[Y_\gamma]$,  we set $\mathfrak a_\alpha=\mathfrak a\cap
K[Y_\alpha]$. By the induction hypothesis, the ideals $\mathfrak
a_\alpha$ are generated by the sets $E_\alpha$ such that
$|E_{\alpha}|= |\alpha|$. Then $\mathfrak a$ is generated by the set
$E=\cup_{\alpha<\gamma}E_\alpha$. Using~\cite[Chapter~8,
Section~8.3]{BPoizotCMT}), we get
\[
|E|\leqslant|\bigsqcup_{\alpha<\gamma}E_\alpha|\leqslant|
\bigsqcup_{\alpha<\gamma}\gamma|\leqslant|\gamma|\times|\gamma|=|\gamma|
\]
As we can see, the cardinality of $E$ is not greater than
$|\gamma|$.
\end{proof}

\begin{corollary}\label{generff}
For each difference field $K$, every ideal of the ring
$K\{y_1,\ldots,y_n\}$ is generated by the set of cardinality
$|\Sigma|\times\omega$.
\end{corollary}

Now, we can prove the following result.

\begin{theorem}\label{theorf}
Every difference field $K$ embeds into a difference closed field.
\end{theorem}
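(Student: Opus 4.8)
The plan is to build the closure by a transfinite chain construction, arranged so that one never has to amalgamate two difference fields over a common subfield, since such amalgamation may fail in the difference setting. First I would reformulate the target property through condition ({\bf B2'}): a difference field $L$ is differencly closed precisely when, for every $n$ and every $E\subseteq L\{y_1,\ldots,y_n\}$ admitting a common zero in some difference field extension of $L$, the set $E$ already has a common zero in $L^n$. Such a zero in an extension exists if and only if the perfect difference ideal $\{E\}$ is proper, in which case $\{E\}$ lies in a maximal perfect, hence prime, difference ideal $\frak p\subseteq L\{y_1,\ldots,y_n\}$; then $L\{y_1,\ldots,y_n\}/\frak p$ is a difference integral domain containing $L$ and $M=\Qt(L\{y_1,\ldots,y_n\}/\frak p)$ is a difference field extension in which the residues of the $y_i$ form a common zero. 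The crucial point is that this produces a single extension of $L$ from the single datum $\frak p$, with no amalgamation required.

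The heart of the argument is a one-step operation $F\mapsto F^{+}$ on difference fields. Given $F$, I would well-order the set of all pairs $(n,\frak p)$ with $\frak p$ a prime difference ideal of $F\{y_1,\ldots,y_n\}$ and run a transfinite recursion producing an increasing chain $F=F_0\subseteq\cdots\subseteq F_\sigma\subseteq\cdots$ of difference field extensions. At a successor stage with current field $F_\sigma$ and next ideal $\frak p$, I test whether $\frak p F_\sigma\{y_1,\ldots,y_n\}$ still has a common zero in some difference field extension of $F_\sigma$, equivalently whether $\{\frak p F_\sigma\{y_1,\ldots,y_n\}\}$ is proper; if so I adjoin a generic zero by passing to $F_{\sigma+1}=\Qt(F_\sigma\{y_1,\ldots,y_n\}/\frak q)$ for a prime difference ideal $\frak q$ containing $\frak p F_\sigma\{y_1,\ldots,y_n\}$, and otherwise I set $F_{\sigma+1}=F_\sigma$. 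At limit stages I take unions, which are again difference fields; write $F^{+}$ for the final field. The key mechanism is monotonicity of inconsistency: if $1\in\{\frak p F_\sigma\{y_1,\ldots,y_n\}\}$ at some stage, the same difference-polynomial identity witnesses $1\in\{\frak p\,G\{y_1,\ldots,y_n\}\}$ for every difference field $G\supseteq F_\sigma$. Consequently, if a system with coefficients in $F$ has a common zero in some difference field extension of $F^{+}$, the kernel $\frak p$ of the corresponding substitution homomorphism is a prime difference ideal that was still consistent over $F_\sigma$ at its turn, so a generic zero was adjoined and the system already has a common zero in $F^{+}$. Thus $F^{+}$ solves every system with coefficients in $F$ that remains solvable over an extension of $F^{+}$, and no amalgamation of two fields ever occurs.

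It remains to iterate so as to treat coefficients from the growing field and to control infinite systems. I would fix a regular cardinal $\kappa>|\Sigma|\times\omega$ and build a chain $K=L_0\subseteq L_1\subseteq\cdots\subseteq L_\mu\subseteq\cdots$ of length $\kappa$ by setting $L_{\mu+1}=L_\mu^{+}$ at successors and taking unions at limits, and put $L=\bigcup_{\mu<\kappa}L_\mu$. Given $n$ and $E\subseteq L\{y_1,\ldots,y_n\}$ with a common zero in some difference field extension of $L$, statement~\ref{generff} lets me replace $E$ by a generating set $E'$ of $\{E\}$ of cardinality at most $|\Sigma|\times\omega$; then $E'$ has the same common zeros as $E$, and the coefficients occurring in $E'$ form a subset of $L$ of cardinality less than $\kappa$. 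By regularity of $\kappa$ these coefficients all lie in some $L_\mu$, so $E'$ is a system over $L_\mu$ still solvable over the extension $L\supseteq L_\mu^{+}=L_{\mu+1}$; by the property of the one-step operation $E'$ already has a common zero in $L_{\mu+1}\subseteq L$, which is a common zero of $E$. Hence $L$ satisfies ({\bf B2'}); since $L$ is a field, the earlier equivalence gives ({\bf B1}) and ({\bf B2}) as well, so $L$ is a differencly closed field containing $K$.

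The step I expect to be the main obstacle is exactly the one the construction is arranged to sidestep: in difference algebra two difference field extensions of $K$ need not admit a common difference field overfield, since their tensor product over $K$ may be a pseudofield rather than an integral domain, so one cannot simply adjoin solutions of all consistent systems simultaneously. The device that makes the proof go through is to process prime difference ideals one at a time over the current field, adjoining only a single generic zero and relying on the monotonicity of inconsistency to guarantee that skipped systems are genuinely unsolvable over every larger field and hence impose no obligation under ({\bf B2'}). The cardinality bound of statement~\ref{generff} is what keeps the inner recursion set-sized and the outer chain long enough to capture the coefficients of every relevant, possibly infinite, system.
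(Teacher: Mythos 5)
Your proof is correct, but it takes a genuinely different route from the paper's. The paper confronts the failure of amalgamation head-on: at each stage it tensors together the fraction fields $\Qt(B)$ of \emph{all} differencly finitely generated integral domains over the current field, applies Zorn's lemma to pairs consisting of a subfamily $\mathcal B'$ and a compatible prime difference ideal $\frak p_{\mathcal B'}$ of $\otimes_{B\in\mathcal B'}\Qt(B)$, and passes to the residue field; its closedness argument then hinges on a maximality step showing that the domain $R_0$ attached to a given prime of $L\{y_1,\ldots,y_n\}$ necessarily belongs to the maximal family $\mathcal B'$ (otherwise the family could be enlarged, since $K_{\alpha+1}\otimes\Qt(R_0)$ is a nonzero integral domain), hence $R_0$ embeds into $K_{\alpha+1}$ and the zero is produced through $R=L\otimes_{K_\alpha}R_0\to L$. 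You avoid tensor products and that maximality step altogether by sequentializing: you enumerate the primes of $F\{y_1,\ldots,y_n\}$, adjoin one generic zero at a time via $\Qt(F_\sigma\{y_1,\ldots,y_n\}/\frak q)$, and use upward persistence of inconsistency ($1\in\{\frak p F_\sigma\{y_1,\ldots,y_n\}\}$ persists in every extension, since the perfect-closure operations are monotone under ring extension) to see that skipped systems never become obligations for ({\bf B2'}); this is the standard construction of existentially closed models for an inductive class. The cardinality skeleton is identical in both proofs: statement~\ref{generff} plus a regular cardinal above $|\Sigma|\times\omega$ (the paper's ``first cardinal greater than $|\Sigma|\times\omega$'' is regular, and the paper uses this implicitly exactly where you invoke regularity explicitly). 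The trade-off: your argument is more elementary and transparent, needing only quotients, fraction fields, and the fact that maximal perfect difference ideals are prime, but it is tailored to fields; the paper's tensor-product scheme is heavier, yet it is precisely the template reused for pseudofields in section~\ref{sec5}, where statement~\ref{tens} plays the role of the Zorn maximality in theorem~\ref{theorps} and fraction-field arguments are unavailable.
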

\begin{proof}

{\bf Construction}. The set of all difference finitely generated
integral domains over $K$ up to isomorphism will be denoted by
$\mathcal B$.  We construct the following partially ordered set
$\mathbb S$. The elements of the set are the pairs
\[
\left(\mathop{\otimes}_{B\in\mathcal B'}\Qt(B),\mathfrak p_{\mathcal
B'}\right),
\]
where $\mathcal B'\subseteq \mathcal B$ be some subset in $\mathcal
B$ and  $\mathfrak p_{\mathcal B'}$ be a prime difference ideal in
$\otimes_{B\in \mathcal B'}\Qt(B)$ (all tensor products are taken
over $K$). This set $\mathbb S$ is partially ordered with respect to
the following order
\[
\left(\mathop{\otimes}_{B\in\mathcal B'}\Qt(B),\mathfrak p_{\mathcal
B'}\right)\leqslant\left(\mathop{\otimes}_{B\in\mathcal
B''}\Qt(B),\mathfrak p_{\mathcal B''}\right)\Leftrightarrow \mathcal
B'\subseteq\mathcal B'',\: \mathfrak p_{\mathcal
B''}\cap\mathop{\otimes}_{B\in\mathcal B'}\Qt(B)=\mathfrak
p_{\mathcal B'}.
\]

From Zorn's lemma, it follows that there exists a maximal element in
$\mathbb S$. Let
\[
\left(\mathop{\otimes}_{B\in\overline{\mathcal B}}\Qt(B),\mathfrak
p_{\overline{\mathcal B}}\right)
\]
be a maximal element. The residue field of the ideal $\mathfrak
p_{\overline{\mathcal B}}$ will be denoted by $K_1$. Now, we can
repeat this construction with the field $K_1$ instead of $K$ and
will get a field $K_2$, then $K_3$ and so on. Using transfinite
induction, we are able to define the field $K_\alpha$ for every
ordinal $\alpha$. Let $\gamma$ be the first cardinal greater than
$|\Sigma|\times\omega$. The field $K_{\gamma}$ will be denoted by
$L$. We will show that $L$ is a difference closed field.

{\bf Difference closedness}. Let $\mathfrak t$ be a proper perfect
difference ideal in the polynomial ring $L\{y_1,\ldots,y_n\}$. Then
it is contained in some prime difference ideal $\mathfrak p$ (the
proof is as the proof of Proposition~\ref{prop_eqA}). The quotient
ring
\[
L\{y_1,\ldots,y_n\}/\mathfrak p
\]
will be denoted by $R$. By Proposition~\ref{generff}, the ideal
$\mathfrak p$ is generated by the set of cardinality $|\Sigma|\times
\omega$. Let $E$ denote the set of generators of $\mathfrak p$. So,
$\mathfrak p=(E)$, where $|E|\leqslant|\Sigma|\times \omega$. Since
$|\Sigma|\times \omega<\gamma$, all elements of $E$ are defined over
an intermediate subfield $K_\alpha$ with $\alpha<\gamma$ (see the
paragraph before Theorem~8.14 of~\cite[Chapter~8,
Section~4]{BPoizotCMT}).

The difference ring $K_\alpha\{y_1,\ldots,y_n\}/(E)$ will be denoted
by $R_0$. Then
\[
R=L\mathop{\otimes}_{K_\alpha}R_0=L\mathop{\otimes}_{K_{\alpha+1}}K_{\alpha+1}\mathop{\otimes}_{K_\alpha}R_0.
\]
Now, we see that $R_0$ and $K_{\alpha+1}\otimes R_0$ are integral
domains because they are subrings of $R$. We will show that there
exists a difference homomorphism from $R_0$ to $K_{\alpha+1}$ over
$K_{\alpha}$. For that it suffices to find a difference subring in
$K_{\alpha+1}$ isomorphic to $R_0$.

Let $\mathcal B$  be the family of all difference finitely generated
integral domains over $K_\alpha$ up to isomorphism. Then, by the
definition of $K_{\alpha+1}$, we have $\mathcal B=\mathcal B'\sqcup
\mathcal B''$, where
\[
K_{\alpha+1}=\Qt\left(\left(\mathop{\otimes}_{B\in\mathcal
B'}\Qt(B)\right)/\mathfrak p_{\mathcal B'}\right).
\]
The ring $R_0$ belongs to $\mathcal B$ by the definition of
$\mathcal B$. Let us show that $R_0$ belongs to $\mathcal B'$.
Suppose not. Then the pair
\[
\left( \mathop{\otimes}_{B\in\mathcal
B'}B\mathop{\otimes}_{K_\alpha}R_0, \mathfrak p_{\mathcal B'}\otimes
R_0 \right)
\]
is greater than
\[
\left( \mathop{\otimes}_{B\in\mathcal B'}B, \mathfrak p_{\mathcal
B'} \right),
\]
a contradiction.

Now, we have the difference homomorphism
\[
R=L\otimes_{K_{\alpha+1}} K_{\alpha+1} \otimes_{K_\alpha} R_0\to
L\otimes_{K_{\alpha+1}} K_{\alpha+1} = L.
\]
Then the images of the elements $y_1,\ldots,y_n$ give us the desired
common zero for $\mathfrak p$ and thus for $\mathfrak t$.
\end{proof}

\subsection{Infinitely many indeterminates}\label{sec42}

Let $\kappa$ be a fixed infinite cardinal and $Y$ is a set of
cardinality $\kappa$. By Proposition~\ref{genlemma}, we have the
following result.

\begin{proposition}\label{generfi}
For each difference field $K$, every ideal of the ring $K\{Y\}$ is
generated by the set of cardinality $|\Sigma|\times |Y|$.
\end{proposition}

\begin{theorem}\label{theorefi}
Every difference field $K$ embeds into a $\kappa$-difference closed
field.
\end{theorem}
\begin{proof}

We should repeat the proof of Proposition~\ref{theorf} with some
modifications.

{\bf Construction}. The set of all (up to isomorphism) difference
integral domains generated over $K$ by the set of cardinality
$\kappa$  will be denoted by $\mathcal B$. We construct the same
partially ordered set $\mathbb S$ with elements
\[
\left(\mathop{\otimes}_{B\in\mathcal B'}\Qt(B),\mathfrak p_{\mathcal
B'}\right),
\]
where $\mathcal B'\subseteq \mathcal B$ is some subset in $\mathcal
B$ and  $\mathfrak p_{\mathcal B'}$ is a prime difference ideal in
$\otimes_{B\in \mathcal B'}\Qt(B)$. And we use the same order. By
Zorn's lemma, there exists a maximal element in $\mathbb S$. Let
\[
\left(\mathop{\otimes}_{B\in\overline{\mathcal B}}\Qt(B),\mathfrak
p_{\overline{\mathcal B}}\right)
\]
be a maximal element in this set. The residue field of the ideal
$\mathfrak p_{\overline{\mathcal B}}$ will be denoted by $K_1$. Now,
we will repeat this construction with the field $K_1$ instead of $K$
and will get a field $K_2$, then $K_3$ and so on. Using transfinite
induction, we define a field $K_\alpha$ for every ordinal $\alpha$.
Let $\gamma$ be the first cardinal greater than
$|\Sigma|\times\kappa$. The field $K_{\gamma}$ will be denoted by
$L$.

{\bf Difference closedness}. We should repeat the second part of the
proof of Theorem~\ref{theorf} using $\kappa$ instead of $\omega$ and
Proposition~\ref{generfi} instead of Proposition~\ref{genlemma}.
\end{proof}

\begin{remark}
It should be noted that, if the group $\Sigma$ is free, then we can
suppose that $L$ is algebraically closed in the previous theorem.
There are two ways of how to show this.

The first one, using the theorem, we obtain a field $L_0$ such that
$L_0$ is $\kappa$-closed. Let $L_0'$ be the algebraic closure of
$L_0$. Since $\Sigma$ is a free group, one can extend the action of
$\Sigma$ to the field $L_0'$. Then we repeat the procedure with
$L_0'$ instead of $K$ and obtain $L_1$ and its algebraic closure
$L_1'$ with some action of $\Sigma$. And, if $\gamma$ is the first
cardinal greater than $\kappa$, then $L_\gamma=L_\gamma'$ is the
desired field.

The second proof. We can repeat the proof of the previous theorem
with the following modifications. At each step, instead of the field
$K_\alpha$, we should choose its algebraic closure. Since $\Sigma$
is a free group, one can extend the action of $\Sigma$ to the
algebraic closure of $K_\alpha$.

The similar notice holds for difference closed fields, that is, if
the group $\Sigma$ is free, then every difference field embeds into
a difference closed field being algebraically closed. In
particulary, this is holds in the case of one automorphism, that is
$\Sigma=\mathbb Z$ (see~\cite{ChadzSur}).
\end{remark}

\section{Strong form of the Nullstellensatz for fields}\label{sec5}

\subsection{Finitely many indeterminates}\label{sec51}

We shall show that, for difference closed fields, there is a
bijection between the difference algebraic varieties and the perfect
difference ideals.

\begin{proposition}\label{prop_simple}
Let $K$ be a difference closed field and $A$ be a difference
finitely generated algebra over $K$ such that the zero ideal is the
only perfect ideal of $A$. Then $A$ coincides with $K$.
\end{proposition}
\begin{proof}

The algebra $A$ can be presented as follows
\[
A=K\{y_1,\ldots,y_n\}/\mathfrak p,
\]
where $\mathfrak p$ is a maximal perfect ideal. Since $K$ is
difference closed, the set $V(\mathfrak p)$ is not empty. Therefore,
there exists a point $x\in K^n$ such that $\mathfrak p\subseteq
I(x)$. But the ideal $I(x)$ is perfect. So, $\mathfrak p=I(x)$.
Consequently, $A$ coincides with $K$.
\end{proof}

\begin{theorem}\label{mainth}
Let $K$ be a difference closed field. Then, for every perfect
difference ideal  $\mathfrak t\subseteq K\{y_1,\ldots,y_n\}$, the
equality $\mathfrak t=I(V(\mathfrak t))$ holds.
\end{theorem}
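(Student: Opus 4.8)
The plan is to prove the two inclusions separately. The inclusion $\frak t\subseteq I(V(\frak t))$ is immediate: every difference polynomial in $\frak t$ vanishes at each common zero of $\frak t$, hence on all of $V(\frak t)$. The whole content of the theorem is therefore the reverse inclusion $I(V(\frak t))\subseteq\frak t$, which I would establish by a difference version of the Rabinowitsch trick.

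Fix $f\notin\frak t$; the goal is to produce a point $x\in V(\frak t)$ with $f(x)\neq 0$, which shows $f\notin I(V(\frak t))$. Introduce one further difference indeterminate $z=y_{n+1}$ and form the perfect difference ideal $\frak t'=\{\frak t,\,1-zf\}$ inside $K\{y_1,\ldots,y_n,z\}$. A point $(x,c)\in K^{n+1}$ lies in $V(\frak t')$ exactly when $x\in V(\frak t)$ and $c\,f(x)=1$; hence $V(\frak t')\neq\varnothing$ if and only if there is some $x\in V(\frak t)$ with $f(x)\neq 0$. Since $K$ is differencly closed, condition ({\bf B1}) tells us that the perfect ideal $\frak t'$ is proper precisely when $V(\frak t')\neq\varnothing$. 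Thus the theorem reduces to ruling out $\frak t'=K\{y_1,\ldots,y_n,z\}$: once $\frak t'$ is known to be proper, the witnessing zero $(x,1/f(x))$ finishes the argument.

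To rule out improperness I would clear denominators. Let $S$ be the multiplicatively closed set generated by $\Sigma f=\{\sigma f\mid\sigma\in\Sigma\}$ in $A=K\{y_1,\ldots,y_n\}$; it is $\Sigma$-stable, so $S^{-1}A$ is a difference ring and there is a difference homomorphism $\psi\colon K\{y_1,\ldots,y_n,z\}\to S^{-1}A$ fixing the $y_i$ and sending $z\mapsto f^{-1}$ (hence $\sigma z\mapsto(\sigma f)^{-1}$). Under $\psi$ the generator $1-zf$ maps to $0$ and $\frak t$ maps into $S^{-1}\frak t$, so $\psi^{-1}(\{S^{-1}\frak t\})$ is a perfect difference ideal containing the generators of $\frak t'$ and therefore contains $\frak t'$. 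Consequently, if $1\in\frak t'$ then $1=\psi(1)\in\{S^{-1}\frak t\}$, the perfect closure of $S^{-1}\frak t$ in $S^{-1}A$.

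The main obstacle is the localization lemma: if $\frak t$ is perfect and $S$ is $\Sigma$-stable, then $S^{-1}\frak t$ is again perfect, so that $\{S^{-1}\frak t\}=S^{-1}\frak t$. Granting this, $1\in S^{-1}\frak t$ forces $\frak t\cap S\neq\varnothing$, i.e. some product $\prod_i(\sigma_i f)^{d_i}$ lies in $\frak t$; since $\frak t$ is perfect this yields $f\in\frak t$, contradicting the choice of $f$, so $\frak t'$ must be proper. I expect the proof of the localization lemma to be the technical heart, because a relation $\prod_i(\sigma_i f)^{d_i}\prod_j(\sigma_j a)^{e_j}\in\frak t$ mixes transforms of $f$ and of $a$ and cannot be separated by the defining property of perfectness alone. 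The clean way around this is the intersection formula $\{uv\}=\{u\}\cap\{v\}$ for perfect difference ideals: taking $u=\prod_i(\sigma_i f)^{d_i}$ and $v=\prod_j(\sigma_j a)^{e_j}$, one has $\{u\}=\{f\}$ and $\{v\}=\{a\}$, so $\{f\}\cap\{a\}=\{uv\}\subseteq\frak t$, whence $fa\in\frak t$ and therefore $a\in S^{-1}\frak t$, which is exactly the perfectness of $S^{-1}\frak t$.
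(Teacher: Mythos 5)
Your proposal is correct, but it takes a genuinely different route from the paper's. The paper argues as follows: given $t\notin\frak t$, it passes to the quotient $A=K\{y_1,\ldots,y_n\}/\frak t$, localizes at the $\Sigma$-stable multiplicative set generated by the orbit of $t$ (a localization that is nontrivial precisely because $\frak t$ is perfect), picks a maximal perfect ideal $\frak n$ of $S^{-1}A$, and invokes the Statement proved immediately before the theorem --- a differencly finitely generated algebra over a differencly closed $K$ whose only perfect ideal is zero coincides with $K$ --- to conclude $S^{-1}A/\frak n=K$; pulling back gives a point of $V(\frak t)$ at which $t$ is invertible, hence nonzero. You instead run the classical Rabinowitsch pattern: adjoin a difference variable $z$, form $\frak t'=\{\frak t,1-zf\}$, and reduce everything to ({\bf B1}) applied in $n+1$ variables, never using that auxiliary Statement. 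Both proofs exploit perfectness in the same way (no product of transforms of $f$ lies in $\frak t$) and both localize at the same multiplicative set, but your properness argument requires a tool the paper never states: the intersection formula $\{uv\}=\{u\}\cap\{v\}$, equivalently your localization lemma. That formula is a genuine theorem of difference algebra (it is in Cohn's book and in [Lv]), but it is proved there for free commutative $\Sigma$, whereas here $\Sigma$ is an arbitrary group; so strictly speaking you must remark that the shuffling proof carries over. It does, and with inverses available it even simplifies: if $ab\in\frak p$ with $\frak p$ perfect, then for $x=a\tau(b)$ one has $x\cdot\tau^{-1}(x)=(ab)\cdot\tau^{-1}(a)\tau(b)\in\frak p$, whence $x\in\frak p$ by perfectness, and the formula follows by the usual iteration of the prime operation. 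As for what each approach buys: the paper's proof is shorter given its preceding Statement and keeps the perfect-closure machinery implicit --- though its step ``let $\frak n$ be a maximal perfect ideal of $S^{-1}A$'' tacitly needs the zero ideal of $S^{-1}A$ to have proper perfect closure, which is a cousin of exactly your localization lemma --- while your proof is independent of that Statement, reduces the strong Nullstellensatz to the weak one in the familiar way, and makes the localization issue explicit, at the cost of importing the Cohn intersection formula.
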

\begin{proof}
The inclusion $\mathfrak t\subseteq I(V(\mathfrak t))$ holds by
definition. Let us show the other one. Consider the difference
algebra $A=K\{y_1,\ldots,y_n\}/\mathfrak t$. Let $x$ be an element
not belonging to the ideal $\mathfrak t$. Let $S$ be the
multiplicatively closed set generated by the elements $\sigma(x)$
for all $\sigma\in \Sigma$. Since $\mathfrak t$ is perfect, $S$ does
not meet the ideal $\mathfrak t$. Then the ring $S^{-1}A $ is a
nontrivial difference ring. By definition, $S^{-1}A=A\{1/t\}$ is a
difference finitely generated algebra over $K$. Let $\mathfrak n$ be
a maximal perfect ideal in the ring $S^{-1}A$. Then, from
Proposition~\ref{prop_simple}, if follows that $S^{-1}A/\mathfrak n$
coincides with $K$. Let $\mathfrak m$ be the contraction of
$\mathfrak n$ to $A$. Then $A/\mathfrak m$ coincides with $K$. Thus,
the images of the elements $y_1,\ldots,y_n$ define a point in $K^n$
such that all elements of $\mathfrak t$ vanish at this point but $x$
does not.
\end{proof}

The strong version of the difference Nullstellensatz has another
form.

\begin{theorem}\label{thm_closed_gen}
Let $K$ be a difference closed difference field. Then, for every
subsets $E,W\subseteq K\{y_1,\ldots,y_n\}$ such that $W$ is finite,
if there is a difference field $L$ containing $K$ and a point $x\in
L^n$ such that all elements of $E$ vanish at $x$ and $x$ is not a
zero for all elements of $W$, then there is an element $x'\in K^n$
with the same property.
\end{theorem}
\begin{proof}

Let $\mathfrak p$ be a prime difference ideal of
$K\{y_1,\ldots,y_n\}$ consisting of all polynomials vanishing at $x$
(so, $\mathfrak p$ contains $E$ and does not meet $W$) and
$A=K\{y_1,\ldots,y_n\}/\mathfrak p$. Let $S$ be the multiplicatively
closed set generated by the elements $\sigma(w)$ for all $\sigma\in
\Sigma$ and $w\in W$. Then $S$ is stable under the action of
$\Sigma$ and does not meet $\mathfrak p$. Thus, the ring $R=S^{-1}A$
is difference generated over $K$ by images of $y_i$ and elements
$1/w$, where $w\in W$. Therefore, $R$ is difference finitely
generated over $K$. Let $\mathfrak m$ be an arbitrary maximal
difference ideal of $R$, then $R/\mathfrak m$ coincides with $K$
because $K$ is difference closed. Thus, the images of $y_i$ in
$R/\mathfrak m$ give an element $s$ of $K^n$ such that every element
of $E$ vanishes at $s$ and $s$ is not a zero for all elements of
$W$.
\end{proof}

\subsection{Infinitely many indeterminates}\label{sec52}

In this case, the strong form of the Nullstellensatz also holds. Let
$\kappa$ be an infinite cardinal.

\begin{proposition}\label{prop_max}
Let $K$ be a $\kappa$-closed difference field and $A$ be a
difference $K$ algebra generated by the family $Y$ over $K$ such
that $|Y|\leqslant \kappa$ and the zero ideal of $A$ is the only
perfect difference ideal of $A$. Then $A$ coincides with $K$.
\end{proposition}
\begin{proof}

The algebra $A$ can be presented in the following form
\[
A=K\{Y\}/\mathfrak p,
\]
where $\mathfrak p$ is a maximal perfect ideal. Since $K$ is
$\kappa$-difference closed, $V(\mathfrak p)$ is not empty. Hence,
there exists a point $x\in K^Y$ such that $\mathfrak p\subseteq
I(x)$. But $I(x)$ is a prefect ideal, so, $\mathfrak p=I(x)$. And,
therefore, the ring $A$ coincides with $K$.
\end{proof}

\begin{theorem}\label{mainthi}
Let $K$ be a $\kappa$-closed difference field and let $Y$ be a set
such that $|Y|\leqslant\kappa$. Then, for every perfect difference
ideal $\mathfrak t\subseteq K\{Y\}$, the equality $\mathfrak
t=I(V(\mathfrak t))$ holds.
\end{theorem}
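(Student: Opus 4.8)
The plan is to mirror the proof of Theorem~\ref{mainth}, the only genuine change being that the finite-generation count is replaced by the cardinality bound $|Y|\leqslant\kappa$. The inclusion $\frak t\subseteq I(V(\frak t))$ is immediate, since by definition every polynomial of $\frak t$ vanishes at each of its common zeros. For the reverse inclusion I would argue contrapositively: I fix an element $t\in K\{Y\}$ with $t\notin\frak t$ and produce a single point of $V(\frak t)$ at which $t$ does not vanish; this shows $t\notin I(V(\frak t))$, and since $t$ is arbitrary outside $\frak t$ it gives $I(V(\frak t))\subseteq\frak t$.

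To produce such a point, set $A=K\{Y\}/\frak t$, so that the image of $t$ in $A$ is nonzero, and let $S$ be the multiplicatively closed, $\Sigma$-stable subset of $A$ generated by the $\Sigma$-transforms of the image of $t$, that is, the set of products $\sigma_1(t)^{k_1}\cdots\sigma_m(t)^{k_m}$. Because $\frak t$ is perfect, no such product of transforms of $t$ can lie in $\frak t$ unless $t$ itself does; hence $0\notin S$ and the localization $S^{-1}A$ is a nontrivial difference $K$-algebra. The decisive point, and the only place where the infinite-variable situation really differs from Theorem~\ref{mainth}, is the cardinality bookkeeping: $S^{-1}A=A\{1/t\}$ is generated as a difference $K$-algebra by the images of $Y$ together with one further difference element inverting $t$, so it admits a difference generating family of cardinality at most $|Y|+1\leqslant\kappa$, the last inequality holding because $\kappa$ is infinite. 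This is exactly the hypothesis required to apply the statement preceding this theorem.

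Now I choose a maximal perfect ideal $\frak n$ of $S^{-1}A$ (it exists by a Zorn argument, and is automatically a difference ideal). The quotient $S^{-1}A/\frak n$ is then a difference $K$-algebra generated by a family of cardinality $\leqslant\kappa$ whose only perfect ideal is the zero ideal, so the preceding statement forces $S^{-1}A/\frak n=K$. Contracting $\frak n$ along $A\to S^{-1}A$ yields a difference ideal $\frak m$ of $A$ for which $A/\frak m$ embeds over $K$ into $S^{-1}A/\frak n=K$; hence $A/\frak m=K$. The composite $K\{Y\}\to A\to A/\frak m=K$ is a difference $K$-homomorphism, that is, a point $x\in K^Y$ with $\frak t\subseteq I(x)$, so $x\in V(\frak t)$. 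Since $t$ becomes invertible in $S^{-1}A$, its image in $K$ is nonzero, whence $t(x)\neq 0$ and $t\notin I(x)$. As $x\in V(\frak t)$ gives $I(V(\frak t))\subseteq I(x)$, we conclude $t\notin I(V(\frak t))$, completing the argument.

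I expect the main obstacle to be precisely the cardinality control described above: one must check that forming the localization $S^{-1}A$ and then passing to the quotient by $\frak n$ does not inflate the number of difference generators past $\kappa$, since the preceding statement is available only for algebras generated by a family of size $\leqslant\kappa$. Once this is secured, every remaining step transfers verbatim from the finitely generated case of Theorem~\ref{mainth}.
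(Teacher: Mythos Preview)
Your proposal is correct and follows essentially the same argument as the paper: localize $A=K\{Y\}/\frak t$ at the $\Sigma$-stable multiplicative set generated by $t$, use the cardinality bound $|Y\cup\{1/t\}|\leqslant\kappa$ to invoke the preceding statement for a maximal perfect ideal $\frak n$ of $S^{-1}A$, and pull the resulting $K$-point back through $A$. Your write-up is in fact slightly more explicit than the paper's in justifying why $0\notin S$ and why $A/\frak m=K$, but the route is identical.
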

\begin{proof}

The inclusion $\mathfrak t\subseteq I(V(\mathfrak t))$ follows from
definition. Let $K\{Y\}/\mathfrak t$ be denoted by $A$ and $x$ is an
element not belonging to the ideal $\mathfrak t$. Let $S$ be the
multiplicatively closed set generated by the elements $\sigma(x)$
for all $\sigma\in \Sigma$. Then $S$ is stable under the action of
$\Sigma$ and does not meet the ideal $\mathfrak t$. The ring
$S^{-1}A$ is a nontrivial difference ring. By definition, we see
that $S^{-1}A=A\{1/t\}$. Consequently, $S^{-1}A$  is generated over
$K$ by the set $Y\cup\{1/t\}$ and we have
$|Y\cup\{1/t\}|\leqslant|Y|+1\leqslant\kappa+1=\kappa$. Let
$\mathfrak n$ be a maximal perfect ideal of the ring $S^{-1}A$.
Then, from Proposition~\ref{prop_max}, it follows that
$S^{-1}A/\mathfrak n$ coincides with $K$. Let $\mathfrak m$ be the
contraction of $\mathfrak n$ to $A$. Then $A/\mathfrak m$ also
coincides with $K$. The latter means that the images of elements of
$Y$ define a point in $K^Y$ such that all elements of $\mathfrak t$
vanish at this point but $t$ does not.
\end{proof}

As in the case of finitely many indeterminates, the strong version
of the difference Nullstellensatz has another form.

\begin{theorem}\label{thm_closed_gen_in}
Let $K$ be a $\kappa$-closed difference field and $Y$ is a set of
cardinality $\kappa$. Then, for every subsets $E,W\subseteq K\{Y\}$
such that $|W|\leqslant\kappa$, if there is a difference field $L$
containing $K$ and a point $x\in L^Y$ such that all elements of $E$
vanish at $x$ and $x$ is not a zero for all elements of $W$, then
there is an element $x'\in K^Y$ with the same property.
\end{theorem}
\begin{proof}

Let $\mathfrak p$ be a prime difference ideal of $K\{Y\}$ consisting
of all polynomials vanishing at $x$ (so, $\mathfrak p$ contains $E$
and does not meet $W$) and $A=K\{Y\}/\mathfrak p$. Let $S$ be the
multiplicatively closed set generated by the elements $\sigma(w)$
for all $\sigma\in \Sigma$ and $w\in W$. Then $S$ is stable under
the action of $\Sigma$ and does not meet $\mathfrak p$. Thus, the
ring $R=S^{-1}A$ is difference generated over $K$ by $Y\cup
\{1/w\mid w\in W\}$ and $|Y\cup W|\leqslant \kappa$. Let $\mathfrak
m$ be an arbitrary maximal difference ideal of $R$, then
$R/\mathfrak m$ coincides with $K$ because $K$ is $\kappa$-closed.
Thus, the image of $Y$ in $R/\mathfrak m$ gives an element $s$ of
$K^Y$ such that every element of $E$ vanishes on $s$ and $s$ is not
a zero for all elements of $W$.
\end{proof}

\section{The case of pseudofields}\label{sec6}

From this moment, we assume that the group $\Sigma$ is infinite. The
case of finite group is studied in~\cite{Tr5}.

\subsection{Finitely many indeterminates}\label{sec61}

We will prove some auxiliary statements before proving the main
result.

\begin{proposition}\label{specst}
Let $\gamma_\Sigma$ be the cardinal coinciding with the number of
all maximal filters on $\Sigma$. Then, for every pseudofield $L$, we
have $|\Spec L|\leqslant \gamma_\Sigma$.
\end{proposition}
\begin{proof}
Let $\mathfrak m$ be a prime ideal of $L$, then the corresponding
quotient field will be denoted by $K$ and $\pi\colon L\to K$ will be
the quotient homomorphism. Then, from Proposition~\ref{taylor}, it
follows that there exists a difference homomorphism $\Phi\colon L\to
\Fun K$ such that $\Phi(a)(e)=\pi(a)$, where $e$ is the identity of
$\Sigma$. Since $L$ is a pseudofield, $\Phi$ is injective. Moreover,
since $L$ is absolutely flat, the map $\Spec \Fun K\to \Spec L$ is
surjective~\cite[Chapter~3, Exercise~29 and Exercise~30]{AM}.
Consequently, $|\Spec L|\leqslant|\Spec \Fun K|$. But all prime
ideals of $\Fun K$ correspond to all maximal filters on $\Sigma$.
So, we have the desired result.
\end{proof}

\begin{proposition}\label{simp}
Every simple difference ring embeds into a pseudofield.
\end{proposition}
\begin{proof}

Let $R$ be a simple difference ring and $\mathfrak m$ be its maximal
ideal. Then the residue field of this ideal will be denoted by $K$.
Let $\pi\colon R\to K$ be the quotient homomorphism. Then, from
Proposition~\ref{taylor}, it follows that there exists a difference
homomorphism $\Phi\colon R\to \Fun K$ such that $\Phi(a)(e)=\pi(a)$,
where $e$ is the identity of $\Sigma$. The ring $\Fun K$ is an
absolutely flat difference ring and let $\mathfrak n$ be a maximal
difference ideal in $\Fun K$. Then $L=\Fun K/\mathfrak n$ is simple
and absolutely flat~\cite[Section~2, Exercis~28]{AM}, thus, a
pseudofield. The composition $R\to\Fun K\to L$ gives us the desired
embedding.
\end{proof}

\begin{proposition}\label{tens}
Let $A$ be a pseudofield, and $B$ and $C$ are nonzero difference
algebras over $A$. Then the difference algebra $B\otimes_{A}C$  is
nonzero.
\end{proposition}
\begin{proof}
Let $\mathfrak q$ be a prime ideal of $B$, then its contraction to
$A$ will be denoted by $\mathfrak p$. Since $A$ is a simple
difference ring, $A$ is a subring of $C$. Again, since $A$ is
absolutely flat, the contraction map $\Spec C \to \Spec A$ is
surjective~\cite[Chapter~3, Exercise~29 and Exercise~30]{AM}).
Therefore, there exists a prime ideal $\mathfrak q'$ in $C$
contracting to $\mathfrak p$.  We will prove that the ring
\[
R=(B\mathop{\otimes}_A C)_{\mathfrak p}= B_\mathfrak
p\mathop{\otimes}_{A_\mathfrak p} C_{\mathfrak p}
\]
is not zero. For that we should show that $B_{\mathfrak p}$ and
$C_\mathfrak p$ are not zero.

The prime ideals of $B_\mathfrak p$ corresponds to the preimage of
$\mathfrak p$ for the natural map $\Spec B\to \Spec A$. By
definition, the fibre is not empty. Therefore, $\Spec B_\mathfrak p$
is not empty and, thus, $B_\mathfrak p$ is not zero. In the same
manner, we prove that $C_\mathfrak p$ is not empty. Since
$A_{\mathfrak p}$ is a field and algebras $B_{\mathfrak p}$ and
$C_\mathfrak p$ are not zero, the ring $R$ is not the zero ring.
\end{proof}

We will say that a partially ordered set $S$ is $\kappa$-Noetherian,
where $\kappa$ is an infinite cardinal, if every strictly ascending
chain of elements of $S$ is of cardinality less than or equal to
$\kappa$. Precisely, for every well-ordered set $\mathcal A$ of
cardinality strictly greater than $\kappa$, every monotone function
$\varphi\colon A\to S$ is stationary.

\begin{proposition}\label{poset}
Let $\{S_\alpha\}_{\alpha\in\mathcal A}$ be a family of partially
ordered sets and, for every $\alpha$, the set $S_\alpha$ is
$\kappa$-Noetherian. Then the partially ordered set
$\prod_{\alpha\in \mathcal A}S_\alpha$ is $|\mathcal
A|\times\kappa$-Noetherian.
\end{proposition}
\begin{proof}

Let $\{x_\gamma\}$  be a strictly ascending chain of
$S=\prod_{\alpha\in \mathcal A}S_\alpha$ and $\pi_\alpha\colon S\to
S_\alpha$ be the projections to the corresponding factors. We
produce the following family of sets
\[
T_\alpha=\{\,s\in S_\alpha\mid\exists x_\gamma\colon
s=\pi_\alpha(x_\gamma)\,\}.
\]
Since $S_\alpha$ is $\kappa$-Noetherian, the cardinality of
$T_\alpha$ is less than or equal to $\kappa$. We set
$X=\sqcup_{\alpha}T_\alpha$. Then, for every element  $x_\gamma$, we
define the subset $X_\gamma$ in $X$ by the following rule
\[
X_\gamma=\bigsqcup_\alpha\{\,s\in T_\alpha\mid
s\leqslant\pi_\alpha(x_\gamma) \,\}.
\]
We see that the family $\{X_\gamma\}$ is a strictly ascending chain
in the set of all subsets of $X$. So, the cardinality of this chain
is not greater than the cardinality of $X$, but $|X|=|\mathcal
A|\times\kappa$.
\end{proof}

\begin{proposition}\label{noethprop}
Let $\gamma_\Sigma$ be the cardinal coinciding with the number of
all maximal filters on $\Sigma$. Then, for every pseudofield $K$,
the set $\PRad K\{y_1,\ldots,y_n\}$ is
$|\Sigma|\times\gamma_\Sigma$-Noetherian.
\end{proposition}
\begin{proof}

We will prove a more general statement. Namely, we will show that
\[
\Rad K\{y_1,\ldots,y_n\}
\]
is $|\Sigma|\times\gamma_\Sigma$-Noetherian. Let $\mathfrak
m_\alpha$ be all prime ideals of $K$, the corresponding residue
fields will be denoted by $F_\alpha$. Consider the rings
\[
R_\alpha=R_{\mathfrak m_\alpha}=F_\alpha[\ldots,\sigma y_i,\ldots].
\]
Then, for every radical ideal $\mathfrak t$, we define the family of
ideals $\{\,(\mathfrak t)_{\mathfrak m_\alpha}\,\}$. This family is
an element of the partially ordered set
\[
\prod_{\alpha}\Rad(R_\alpha).
\]

The map $\mathfrak t \mapsto \{\,(\mathfrak t)_{\mathfrak
m_\alpha}\,\}$ preserves the order, i.e., from the inclusion
$\mathfrak t\subseteq\mathfrak t'$, it follows that, for every
$\alpha$, we have $(\mathfrak t)_{\mathfrak m_\alpha}\subseteq
(\mathfrak t')_{\mathfrak m_\alpha}$. Note that different radical
ideals give different families. Indeed, let $\mathfrak
t\neq\mathfrak t'$. Then there exists an element $x\in\mathfrak
t\setminus\mathfrak t'$ (or $x\in\mathfrak t'\setminus\mathfrak t$).
Consequently, there is a prime ideal $\mathfrak p$ containing
$\mathfrak t'$ and not containing $x$. Then $(\mathfrak
t')_{\mathfrak p}\neq (1)$ but $(\mathfrak t)_{\mathfrak p}=(1)$.

Hence, the partially ordered set $\Rad K\{y_1,\ldots,y_n\}$ can be
embedded into the partially ordered set
$\prod_{\alpha}\Rad(R_\alpha)$. The set $\Id R_{\alpha}$ is
$\Sigma$-Noetherian by Proposition~\ref{genlemma}. Therefore, its
subset $\Rad R_{\alpha}$ is also $\Sigma$-Noetherian. Then the
desired result follows from Proposition~\ref{specst} and
Proposition~\ref{poset}.
\end{proof}

\begin{corollary}\label{generatp}
Let $\gamma_\Sigma$ be the cardinal coinciding with the number of
all maximal filters on $\Sigma$. Then, for every pseudofield $K$,
every radical difference ideal of the ring $K\{y_1,\ldots,y_n\}$ can
be presented as $\mathfrak r([E])$, where $|E|\leqslant
|\Sigma|\times\gamma_\Sigma$.
\end{corollary}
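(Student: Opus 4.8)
The plan is to produce the generating set by a greedy transfinite construction, with the cardinality bound forced entirely by the noetherianity already established in Statement~\ref{noethprop}. Fix a radical difference ideal $\frak t\subseteq K\{y_1,\ldots,y_n\}$ and build an increasing family of subsets $E_\gamma\subseteq\frak t$ by transfinite recursion. Start with $E_0=\varnothing$. At a successor stage, if $\frak r([E_\gamma])=\frak t$ the construction stops; otherwise choose an element $x_\gamma\in\frak t\setminus\frak r([E_\gamma])$ and set $E_{\gamma+1}=E_\gamma\cup\{x_\gamma\}$. At a limit stage put $E_\lambda=\bigcup_{\gamma<\lambda}E_\gamma$.

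The point to check is that the ideals $\frak r([E_\gamma])$ form a strictly ascending chain inside $\PRad K\{y_1,\ldots,y_n\}$. Since $\Sigma$ acts by automorphisms, the radical of a difference ideal is again a difference ideal, so each $\frak r([E_\gamma])$ is a radical difference ideal and hence an element of $\PRad K\{y_1,\ldots,y_n\}$. As $E_\gamma\subseteq\frak t$ and $\frak t$ is a radical difference ideal, we have $[E_\gamma]\subseteq\frak t$ and therefore $\frak r([E_\gamma])\subseteq\frak t$ at every stage. Finally the chain is strictly increasing, since at each successor stage we adjoined an element $x_\gamma$ lying in $\frak r([E_{\gamma+1}])$ but, by choice, not in $\frak r([E_\gamma])$.

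Now Statement~\ref{noethprop} applies: the strictly ascending chain $\{\frak r([E_\gamma])\}$ has cardinality at most $|\Sigma|\times\kappa$. In particular the construction cannot run through an ordinal of cardinality exceeding $|\Sigma|\times\kappa$, so it must terminate at some stage, say with the set $E$, at which $\frak r([E])=\frak t$. Since exactly one element is adjoined at each successor step and none at a limit step, the assignment $\gamma\mapsto\frak r([E_\gamma])$ is injective, whence $|E|$ is bounded by the cardinality of the chain, i.e.\ $|E|\leqslant|\Sigma|\times\kappa$. This yields the desired presentation $\frak t=\frak r([E])$.

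The substantive work has already been done in Statement~\ref{noethprop}, so I expect no serious obstacle here beyond the bookkeeping of the recursion. The single point that genuinely relies on the earlier machinery is the termination of the greedy process, which is exactly what $|\Sigma|\times\kappa$-noetherianity supplies; everything else (that radicals of difference ideals are difference ideals, that the chain stays below $\frak t$, and the final counting) is routine.
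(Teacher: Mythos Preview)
Your argument is correct and is exactly the standard derivation the paper has in mind: the corollary is stated without proof, as an immediate consequence of Statement~\ref{noethprop}, and the transfinite greedy construction you give is the canonical way to extract a bounded generating set from a $\kappa$-noetherianity bound. The only verifications needed---that $\frak r([E_\gamma])$ is again a radical difference ideal, that the chain is strictly increasing, and that termination plus the cardinality of $E$ are controlled by the chain length---are all handled correctly.
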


\begin{theorem}\label{theorps}
Every pseudofield $K$ embeds into a difference closed pseudofield.
\end{theorem}
\begin{proof}

{\bf Construction}. Consider the family $\{\,B_\alpha\,\}$ of all
(up to isomorphism) simple difference finitely generated algebras
over $K$. The ring $\otimes_{\alpha}B_\alpha$ will be denoted by $R$
(the tensor products are taken over $K$). From
Proposition~\ref{tens}, it follows that $R$ is a nonzero difference
ring. Let $\mathfrak m$ be a maximal difference ideal in $R$. Then,
by Proposition~\ref{simp}, it follows that the ring $R/\mathfrak m$
can be embedded into a pseudofield $K_1$. Using transfinite
induction, we obtain pseudofields $K_\alpha$ for every ordinal
$\alpha$. Let $\gamma$ be the first cardinal greater than
$|\Sigma|\times \gamma_\Sigma$. Let us show that $L=K_{\gamma}$ is
difference closed.

{\bf Difference closedness}. Let $\mathfrak b$ be a difference ideal
in $L\{y_1,\ldots,y_n\}$. Then it is contained in a maximal
difference ideal $\mathfrak t$. The ring
$L\{y_1,\ldots,y_n\}/\mathfrak t$ will be denoted by $B$.

From corollary~\ref{generatp}, it follows that the ideal $\mathfrak
t$ can be presented as  $\mathfrak r([E])$, where
$|E|\leqslant|\Sigma|\times\gamma_\Sigma$. The ideal $[E]$ will be
denoted by $\mathfrak a$ and the ring
\[
L\{y_1,\ldots,y_n\}/\mathfrak a
\]
by $B'$. The nilradical of  $B'$ is the unique pseudoprime ideal of
$B'$. Since cardinality of $E$ is less than $\gamma$, there exists
an intermediate pseudofield $K_\alpha$ containing all coefficients
of all elements of $E$.  Then
\[
B'=L\mathop{\otimes}_{K_\alpha}K_\alpha \{y_1,\ldots,y_n\}/[E].
\]
But there is a common zero for $E$ in  $K_{\alpha+1}\subseteq L$.
Therefore there exists a difference homomorphism
\[
K_\gamma \{y_1,\ldots,y_n\}/[E]\to L.
\]
Consequently, a difference homomorphism of $B'$ into $L$. Since $L$
is pseudofield, the kernel of the latter homomorphism is pseudoprime
and, thus, the homomorphism factors through $B$. Therefore, we have
a difference homomorphism $B\to L$. The images of elements
$y_1,\ldots,y_n$ define a common zero for $\mathfrak b$ in $L^n$.
\end{proof}

\subsection{Infinitely many indeterminates}\label{sec62}

In this section, we suppose that $\kappa$ is an infinite cardinal
and $Y$ is a set of cardinality $\kappa$.

\begin{proposition}\label{prop_noeth}
Let $\gamma_\Sigma$ be the cardinal coinciding with the number of
all maximal filters on $\Sigma$. Then, for every pseudofield $K$,
the set $\PRad K\{Y\}$ is $|\Sigma\times
Y|\times\gamma_\Sigma$-Noetherian.
\end{proposition}
\begin{proof}

We will show that the set $\Rad K\{Y\}$ is $|\Sigma\times
Y|\times\gamma_\Sigma$-Noetherian. Let $\mathfrak m_\alpha$ be all
the prime ideals of $K$. The corresponding residue fields will be
denoted by $F_\alpha$. Let the ring
\[
R_{\mathfrak m_\alpha}=F_\alpha[\ldots,\sigma y,\ldots].
\]
be denoted by $R_\alpha$. Then every radical ideal $\mathfrak t$
gives the family of ideals $\{\,(\mathfrak t)_{\mathfrak
m_\alpha}\,\}$. This family is an element of the partially ordered
set
\[
\prod_{\alpha}\Rad(R_\alpha).
\]

In the proof of Proposition~\ref{noethprop}, we obtained that the
map $\mathfrak t\mapsto \{\,(\mathfrak t)_{\mathfrak m_\alpha}\,\}$
is injective and preserves the inclusions. Therefore, the partially
ordered set $\Rad K\{Y\}$ embeds into the set
$\prod_{\alpha}\Rad(R_\alpha)$. The set $\Id R_{\alpha}$ is
$|\Sigma\times Y|$-Noetherian. Consequently, its subset  $\Rad
R_{\alpha}$ is also $|\Sigma\times Y|$-Noetherian. Then the desired
result follows from Propositions~\ref{specst} and~\ref{poset}.
\end{proof}

\begin{corollary}\label{corollkappa}
Let $\gamma_\Sigma$ be the cardinal coinciding with the number of
all maximal filters on $\Sigma$. Then, for every pseudofield $K$,
every radical difference ideal of $K\{Y\}$ can be presented as
$\mathfrak r([E])$, where $|E|\leqslant |\Sigma\times
Y|\times\gamma_\Sigma$.
\end{corollary}

\begin{theorem}\label{theorii}

Every pseudofield $K$ embeds into a $\kappa$-closed pseudofield.
\end{theorem}
\begin{proof}

{\bf Construction}. Consider the family $\{\,B_\alpha\,\}$
consisting of all (up to isomorphism) simple difference $K$-algebras
generated by a set of cardinality $\kappa$. As in the proof of
Proposition~\ref{theorps}, we take a maximal difference ideal
$\mathfrak m$ in $\otimes_{\alpha}B_\alpha$. Then the ring
$(\otimes_{\alpha}B_\alpha)/\mathfrak m$ embeds into a pseudofield
$K_1$. Repeating the process, we obtain a pseudofield $K_\alpha$ for
all ordinals $\alpha$. We will show that $K_\gamma$ is
$\kappa$-closed if $\gamma$ is the first cardinal greater than
$|\Sigma|\times \gamma_\Sigma$.

{\bf Difference closedness}. As in the proof of
Theorem~\ref{theorps}, we consider a maximal difference ideal
$\mathfrak t$ in $L\{Y\}$ and $B=L\{Y\}/\mathfrak t$.

Using corollary~\ref{corollkappa} instead of~\ref{generatp}, we have
$\mathfrak t = \mathfrak r[E]$, where
$|E|\leqslant|\Sigma|\times\gamma_\Sigma\times \kappa$. In the same
manner, we get
\[
B=L\{Y\}/\mathfrak a=L\mathop{\otimes}_{K_\alpha}K_\alpha \{Y\}/[E].
\]
Since there is a common zero for $E$ in  $K_{\alpha+1}\subseteq L$,
we have $B=L$. Whence, the images of elements of $Y$ define a common
zero for $\mathfrak b$ in $L^Y$.
\end{proof}

\bibliographystyle{plain}
\bibliography{full_bib}

\begin{thebibliography}{10}

\bibitem{AmMasPV}
K.~Amano and A.~Masuoka.
\newblock {P}icard-{V}essiot extensions of artinian simple module algebras.
\newblock {\em J. {A}lgebra}, 285:743--767, 2005.

\bibitem{AOT}
B.~Antieau, A.~Ovchinnikov, and D.~Trushin.
\newblock Galois theory of difference equations with periodic parameters.
\newblock Submitted for publication, \url{http://arxiv.org/abs/1009.1159},
  2010.

\bibitem{AM}
M.~F. Atiyah and I.~G. Macdonald.
\newblock {\em Introduction to commutative algebra}.
\newblock {A}ddison-{W}esley, 1969.

\bibitem{ChadzSur}
Z.~Chatzidakis.
\newblock A survey on the model theory of difference fields.
\newblock {\em Model theory, algebra, and geometry}, 39:65--96, 2000.

\bibitem{ChatHarSinDGG}
Z.~Chatzidakis, C.~Hardouin, and M.~Singer.
\newblock On the definitions of difference {G}alois groups.
\newblock In {\em Model Theory with Applications to Algebra and Analysis.
  Volume 1}, volume 349 of {\em London {M}athematical {S}ociety {L}ecture
  {N}ote {S}eries}, pages 73--110. Cambridge {U}niversity {P}ress, 2008.

\bibitem{ChatzHrushMDF}
Z.~Chatzidakis and E.~Hrushovski.
\newblock Model theory of difference fields.
\newblock {\em {T}rans. {A}mer. {M}ath. {S}oc.}, 351:2997--3071, 1999.

\bibitem{CohnDA}
R.~M. Cohn.
\newblock {\em Difference algebra}.
\newblock Number~17. Interscience {P}ublishers {J}ohn {W}iley \& {S}ons, New
  {Y}ork-{L}ondon-{S}ydeny, 1965.

\bibitem{HarSinLDiff}
C.~Hardouin and M.~F. Singer.
\newblock Differential {G}alois theory of linear difference equations.
\newblock {\em Mathematische {A}nnalen}, 342(2):333--377, 2008.

\bibitem{HrushvN}
E.~Hrushovski and F.~Point.
\newblock On von {N}eumann regular rings with an automorphism.
\newblock {\em Journal of Algebra}, 315(1):76--120, 2007.

\bibitem{LevinDA}
A.~Levin.
\newblock {\em Difference algebra}, volume~8 of {\em Algebra and
  {A}pplications}.
\newblock Springer, New {Y}ork, 2008.

\bibitem{MorikDG1}
S.~Morikawa.
\newblock On a general difference {G}alois theory {I}.
\newblock {\em Annales de {I}'{I}nstitut {F}ourier}, 59(7):2709--2732, 2009.

\bibitem{UmMorDG2}
S.~Morikawa and H.~Umemura.
\newblock On a general difference {G}alois theory {II}.
\newblock {\em Annales de {I}'{I}nstitut {F}ourier}, 59(7):2733--2771, 2009.

\bibitem{BPoizotCMT}
B.~Poizat.
\newblock {\em A Course in Model Theory: An Introduction to Contemporary
  Mathematical Logic}.
\newblock {S}pringer-{V}erlag, New York, 2000.

\bibitem{TakeuHopPV}
M.~Takeuchi.
\newblock A {H}opf algebraic approach to the {P}icard-{V}essiot theory.
\newblock {\em J. {A}lgebra}, 122:481--509, 1989.

\bibitem{Tr5}
D.~Trushin.
\newblock Difference {N}ullstellensatz in the case of finite group.
\newblock Submitted for publication, \url{http://arxiv.org/abs/0908.3863},
  2010.

\bibitem{vdPSinDcE}
M.~van~der Put and M.~F. Singer.
\newblock {\em Galois theory of difference equations}, volume 1666 of {\em
  Lecture Notes in Mathematics}.
\newblock {Springer-{V}erlag}, Berlin, 1997.

\bibitem{vdPSinLDcE}
M.~van~der Put and M.~F. Singer.
\newblock {\em Galois theory of linear differential equations}, volume 328 of
  {\em Grundlehren der Mathematischen Wissenschaften {[Fundamental} Principles
  of Mathematical Sciences]}.
\newblock {Springer-Verlag}, Berlin, 2003.

\bibitem{WibmerDcChev}
M.~Wibmer.
\newblock A {C}hevalley theorem for difference equations.
\newblock \url{http://arxiv.org/abs/1010.5066}, 2010.

\bibitem{WibmerThesis}
M.~Wibmer.
\newblock {\em Geometric Difference {G}alois Theory}.
\newblock PhD thesis, Heidelberg, 2010.

\end{thebibliography}

\end{document}